\documentclass[12pt]{amsart}
\usepackage{hyperref,amsfonts,amssymb}

\newtheorem{thm}{Theorem}
\newtheorem{cor}[thm]{Corollary}
\theoremstyle{definition}
\newtheorem{defn}{Definition}

\begin{document}

\title{Subgroups of Finite Fields As Cap Sets}

\author{{Anthony Kable}
  \and{Melissa Mills} \and{David J. Wright}}

\address{Dept. of
  Math.\\Oklahoma State University\\401 MSCS\\Stillwater, OK 74078}

\date{\today}

\keywords{  Finite geometry, maximal cap, Sidon set, SET game, EvenQuads}

\maketitle

\begin{abstract}
  We show the subgroup of 20 nonzero fourth powers in the finite field
  of order 81 is a cap set. Similarly, the subgroup of 9 nonzero
  seventh powers in the field of order 64 is a cap set. These are the
  cases related to the card games of SET and EvenQuads, and both are
  known to be maximal cap sets. A corollary is that the cosets of
  these subgroups form a partition by maximal caps of the
  multiplicative groups of their respective fields. We identify
  certain multiplicative subgroups of fields of orders 243 and 729
  as cap sets, and show in general that the subgroup of $(2^n-1)$-th
  powers is a cap set in the field of order $2^{2n}$.
\end{abstract}

\section{Introduction}

This note shows that certain subgroups of the multiplicative group
$\mathbb{F}_q^\times$ of the finite field $\mathbb{F}_q$ of order $q$
are cap sets in the corresponding affine geometries $\text{AG}(n,p)$
where $q=p^n$ and $p$ is prime.  The two instances
$\text{AG}(4,3)\approx \mathbb{F}_{81}$ and
$\text{AG}(6,2)\approx \mathbb{F}_{64}$ are related to the
pattern-matching card games SET and EvenQuads.  The traditional
meaning of \emph{cap} in affine geometry is a set of points for which
no three points are collinear (see \cite{Hill1978} for basic
definitions). Over $\mathbb{F}_3$, that is equivalent to the set
containing no affine lines.  For $\text{AG}(6,2)$, this definition was
adapted in \cite{HowManyCardsEvenQuads2023} to mean a set of points
for which any subset of 4 points is in general position, or
equivalently that the set contains no affine plane over
$\mathbb{F}_2$.  This is called \emph{4-general} in \cite{Bennett2019}
and a \emph{2-cap} in \cite{HuangTaitWon2019}. In both cases, a cap
set may also be redefined in terms of solutions of a linear equation.
\begin{defn}
  \begin{enumerate}
  \item For $q=3^n$, a subset $S\subseteq \mathbb{F}_q$ is a cap set
    if there are no solutions to $a+b+c=0$ for distinct elements
    $a,b,c\in S$.
  \item For $q=2^n$, a subset $S\subseteq \mathbb{F}_q$ is a cap set
    if there are no solutions to $a+b+c+d=0$ for distinct elements
    $a,b,c,d\in S$.
  \end{enumerate}
\end{defn}

A fundamental theorem about finite fields first asserted in
\cite{Galois1830} states that $G_q=\mathbb{F}_q^\times$ is a cyclic
group of order $q-1$.  For a modern treatment, see for example,
\cite{Gallian10th} ch.~21. For any divisor $d\mid q-1$, there is a
unique subgroup $G_{q,d}$ of $G_q$ of order $d$ consisting of all
solutions to $x^d=1$ in $\mathbb{F}_q$. This subgroup may also be
described as the subgroup of $\frac{q-1}{d}$-th powers in $G_q$.  Our
main goal is the following theorem.  Part (1) was proved by the second
author in their 2006 masters thesis (\cite{Mills2006}).  To our
knowledge, no other source describes caps in $\text{AG}(n,p)$ as
subgroups of $\mathbb{F}_{p^n}$.

\begin{thm}
  \begin{enumerate}
  \item For $a,b,c\in G_{81,20}$, $a+b+c=0$ if and only if $a=b=c$.
  \item For
    $a,b,c\in G_{243,22}$, $a+b+c=0$ if and only if $a=b=c$.
  \item For
    $a,b,c\in G_{729,28}$, $a+b+c=0$ if and only if $a=b=c$.
  \item For $a,b,c,d\in G_{2^{2n},2^n+1}$, $a+b+c+d=0$
    if and only if, after renaming if necessary, $a=b$ and
    $c=d$.
  \item For even $n$, $G_{2^{2n},2^n+1}\cup\{0\}$is a cap set of size
    $2^n+2$ in $\mathbb{F}_{2^{2n}}$.
  \end{enumerate}
\end{thm}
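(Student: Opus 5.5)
For parts (1)--(3), I would first normalize. If $a+b+c=0$ with $a,b,c\in G=G_{q,d}$, divide by $c$ and set $x=a/c\in G$. Then $-1-x=b/c\in G$. In each case $d$ is even, so $-1\in G$, and hence $1+x\in G$. So it suffices to show that $x\in G$ and $1+x\in G$ force $x=1$; conversely $a=b=c$ gives $3a=0$. The tool is Frobenius: since $y\mapsto y^3$ is additive, a power $(1+x)^m$ with $m$ a short sum of powers of $3$ expands as a product of binomials $1+x^{3^i}$. The relation $x^d=1$ then reduces exponents modulo $d$, and comparing with $(1+x)^d=1$ gives a low-degree polynomial equation for $x$.

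For $q=729$, $d=28=27+1$, so $(1+x)^{28}=(1+x^{27})(1+x)=1+x+x^{27}+x^{28}$. Setting this equal to $1$ and multiplying by $x$ (using $x^{28}=1$), I expect $x^2+x+1=0$, i.e.\ $(x-1)^2=0$ in characteristic $3$, so $x=1$. For $q=243$, $d=22$, I would use $x^{27}=x^{5}$ (since $27\equiv 5 \pmod{22}$). Also $(1+x)^{27}=(1+x)^{5}$, because $(1+x)^{22}=1$. Expanding $(1+x)^5=(1+x^3)(1+x)^2$ and comparing with $(1+x)^{27}=1+x^{5}$ should give a cubic over $\mathbb{F}_3$ with root $1$. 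Its other roots lie in $\mathbb{F}_9$, hence in $\mathbb{F}_9\cap\mathbb{F}_{243}=\mathbb{F}_3$. The case $x=-1$ is excluded because then $1+x=0\notin G$. For $q=81$, $d=20$, I would set $x^{10}=\varepsilon$ and $(1+x)^{10}=\delta$ with $\varepsilon,\delta\in\{\pm1\}$. Then $(1+x)^{10}=(1+x^9)(1+x)$, and after multiplying by $x$ this yields a quadratic $x^2+(1+\varepsilon-\delta)x+\varepsilon=0$ over $\mathbb{F}_3$. So $x\in\mathbb{F}_9$, which together with $x^{20}=1$ forces $x^4=1$. The remaining candidates $x\in\{\pm1,\pm i\}$ are checked directly: for $x=\pm i$ one has $1\pm i$ of order $8$, not dividing $20$.

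For part (4), write $Q=2^n$ and $G=G_{Q^2,Q+1}$. The map $y\mapsto\bar y=y^{Q}$ is an additive automorphism of $\mathbb{F}_{Q^2}$, and it satisfies $\bar y=y^{-1}$ on $G$. Given $a+b+c+d=0$, set $s=a+b=c+d$. If $s=0$, then $a=b$ and $c=d$. Otherwise conjugate: $a^{-1}+b^{-1}=c^{-1}+d^{-1}$, i.e.\ $s/(ab)=s/(cd)$, so $ab=cd$. Then $\{a,b\}$ and $\{c,d\}$ are the root sets of the same quadratic $T^2+sT+ab$, hence coincide. The converse is immediate in characteristic $2$.

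For part (5), part (4) already rules out four distinct elements of $G$ summing to $0$. So the only new case is $0$ together with three distinct $a,b,c\in G$ with $a+b=c$. Conjugating gives $(a+b)/(ab)=1/(a+b)$, so $a^2+ab+b^2=0$. Thus $a/b$ is a primitive cube root of unity lying in $G$, which requires $3\mid 2^n+1$, i.e.\ $n$ odd. For $n$ even this is impossible, so $G\cup\{0\}$ is a cap set of size $2^n+2$.

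The main obstacle is the $q=243$ case. There is no subfield of index $2$, so the exponent bookkeeping (choosing to compare $(1+x)^{27}$ with $(1+x)^5$) must be set up carefully and the resulting cubic factored correctly. Failing that, I would fall back on a finite computation in $\mathbb{F}_{243}$.
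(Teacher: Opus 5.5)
Your proposal is correct. Its overall strategy is the paper's: normalize so that $x$ and $1+x$ both lie in $G$, apply Frobenius, reduce exponents modulo $d$, and reach a low-degree polynomial. Several parts are executed differently, though.

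\textbf{Part 2.} You do what the paper does. The only difference is that you finish with the subfield structure of $\mathbb{F}_{243}$ rather than factoring $-x(x+1)(x-1)^2$. In fact any nonzero polynomial over $\mathbb{F}_3$ of degree less than $5$ forces $x\in\mathbb{F}_3$, so the case you flag as the main obstacle is no harder than the others.

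\textbf{Part 3.} You use $28=27+1$ directly and get $(x-1)^2=0$. The paper uses $81\equiv 25\pmod{28}$, multiplies by $(x+1)^3$, and reaches $1+x^3+x^6=0$. That is just the Frobenius cube of your relation, so your route is shorter.

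\textbf{Part 1.} Here the comparison goes the other way. The paper's choice $27\equiv 7\pmod{20}$ gives the single equation $(x+1)^7=x^7+1$, i.e.\ $x(x+1)(x-1)^4=0$, with no sign cases. Your exponent-$10$ approach needs the parameters $\varepsilon,\delta$, a descent to $\mathbb{F}_9$, and a final check of the square roots of $-1$. You carry out that check correctly: $1\pm i$ has order $8$, which does not divide $20$.

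\textbf{Parts 4 and 5.} Your conjugation $y\mapsto y^{2^n}=y^{-1}$ on $G$ is what the paper's repeated squaring does implicitly. Its identity $w=xy$ and factorization $(1+x)(1+y)=0$ are exactly your $ab=cd$ and Vieta step. Its derivation of $1+y+y^2=0$ parallels your one-line $a^2+ab+b^2=0$. Your formulation has three advantages:
\begin{itemize}
\item it avoids the auxiliary $2^{n+1}$-th power equation;
\item it avoids the case split on $w^{2^n-1}+1$;
\item it handles non-distinct quadruples in the same stroke, so it gives the ``if and only if'' of Part 4 directly rather than only the cap property.
\end{itemize}
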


Thus, Theorem 1 part (1) implies the subgroup $G_{81,20}$ is a cap set
of size 20 in $\mathbb{F}_{81}$, which is shown to be the maximal cap
size in $\text{AG}(4,3)$ in \cite{Pellegrino1970} as well as in
\cite{DavisMaclagan2003}.  Similarly, Theorem 1 part (4) implies the
subgroup $G_{64,9}$ is a cap set of size 9 in $\mathbb{F}_{64}$. The
paper \cite{HowManyCardsEvenQuads2023} proves this is the maximum size
of a cap in $\text{AG}(6,2)$. In both $\mathbb{F}_{81}$ and
$\mathbb{F}_{64}$, it has been proved there is only one maximal cap up
to affine linear isomorphism.

Part (2) of Theorem 1 asserts that $G_{243,22}$ is a cap in
$\mathbb{F}_{243}$. It is proved in \cite{EdelFerretLandjevStorme2002}
that a maximal cap in $\text{AG}(5,3)$ has 45 elements.  We shall also
prove that $G_{243,22}$ is a \emph{complete cap}, meaning it is not
contained in any larger cap set, and that $G_{243,22}$ is the smallest
complete cap.

Part (3) implies $G_{729,28}$ is a cap of size 28 in
$\mathbb{F}_{729}$. Potechin proved a maximal cap in $\text{AG}(6,3)$
has $112=4\cdot 28$ elements (see \cite{Potechin2008}).  With the aid
of computer algebra, we can show the union of certain pairs of cosets,
for instance, $G_{729,28}\cup x\, G_{729,28}$, is a cap set of size
56.  The Potechin cap is not a union of 4 cosets.

A \emph{Sidon set} $S$ in an abelian group is defined by the property
that, if $a+b=c+d$ for $a,b,c,d\in S$ with $a\ne b$ and $c\ne d$, then
$\{a,b\}=\{c,d\}$. For the group
$\mathbb{Z}_2^n\approx \mathbb{F}_{2^n}$, Sidon sets are the same as
cap sets. Theorem 1 part (4) implies $G_{2^{2n},2^{n}+1}$ is a Sidon
set in $\mathbb{F}_{2^{2n}}$, and for even $n$ part (5) implies
$G_{2^{2n},2^{n}+1}\cup \{0\}$ is also a Sidon set in
$\mathbb{F}_{2^{2n}}$ of size $2^n+2$.  Sidon sets of these sizes in
$\text{AG}(2n,2)$ are constructed in \cite{Nagy2025} by means of
conics in $\text{AG}(2,2^n)$. It is proved in
\cite{CaltaGolbergRose2025} that a maximal cap in $\text{AG}(7,2)$ has
size 12, and computer calculations in \cite{CzerwinskiPott2024} show
that a maximal cap in $\text{AG}(8,2)$ has size $18=2^4+2$.

The game \emph{SET} is a popular pattern-matching card game invented
by geneticist Marsha Jean Falco in 1974 and first marketed in
1990. The connection between this game and the affine geometry
$\text{AG}(4,3)$ has been discussed in quite a few sources; see
\cite{JoyOfSET2017} for example.  The game \emph{EvenQuads} was
developed at Bard College out of the 2013 undergraduate senior project
of Jeffrey Pereira (\cite{Pereira2013}) supervised by Lauren Rose. See
\cite{Rose-chapter-2023} for more information about EvenQuads and its
connection to $\text{AG}(6,2)$.

While discussing the work \cite{DavisMaclagan2003} on the mathematics
of the game of SET in 2005, the first author suggested that the
subgroup of order 20 in $\mathbb{F}_{81}^\times$ might be a cap set,
and this idea was proved by the second author in their masters thesis
\cite{Mills2006}. When we learned about the new game of EvenQuads, we
discovered the same phenomenon held for $\mathbb{F}_{64}$.  In the
meantime, several papers \cite{FollettKalailetal2014} and
\cite{AwanFrechetteLiMcMahon2022} have appeared discussing the
partition of $\text{AG}(4,3)$ as the union of 4 disjoint maximal cap
sets with one point left over. The earlier paper \cite{Tucker2007}
gave a partition of $\text{AG}(4,3)$ as a disjoint union of 9 cap sets
of size 9; this paper modeled $\text{AG}(4,3)$ as the two-dimensional
affine geometry $\text{AG}(2,9)$ over $\mathbb{F}_9$ and identified
the 9 point caps as conics. We should like to point out that, if a
subgroup $G\subseteq \mathbb{F}_{q}^\times$ is a cap set, then each
coset $xG$ is clearly also a cap set by the linearity of the defining
equation. This establishes the following:

\begin{cor}
  \begin{enumerate}
  \item The cosets of $G_{81,20}$ form a partition of
    $\mathbb{F}_{81}^\times$ into 4 maximal cap sets.
  \item The cosets of $G_{243,22}$ form a partition of
    $\mathbb{F}_{243}^\times$ into 11 complete cap sets of size 22.
  \item The cosets of $G_{729,28}$ form a partition of
    $\mathbb{F}_{729}^\times$ into 26 cap sets of size 28. These
    cosets may be paired to obtain a partition into 13 cap sets of
    size 56.
  \item The cosets of $G_{2^{2n},2^n+1}$
    form a partition of $\mathbb{F}_{2^{2n}}^\times$ into $2^n-1$ cap
    sets of size $2^n+1$.
  \end{enumerate}
\end{cor}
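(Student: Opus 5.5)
The corollary follows from Theorem 1 together with the observation, stated just before the corollary, that cap sets are preserved under multiplication by a nonzero scalar.

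\textbf{Step 1: cosets of a cap subgroup are caps.} Let $G=G_{q,d}$ be one of the subgroups in Theorem 1 and $x\in\mathbb{F}_q^\times$. The map $y\mapsto xy$ is an $\mathbb{F}_p$-linear bijection of $\mathbb{F}_q$. It sends distinct elements to distinct elements, and it preserves the equations $a+b+c=0$ and $a+b+c+d=0$: if $xa+xb+xc=0$ then $a+b+c=0$, and likewise for four terms. Hence a solution in distinct elements of $xG$ would pull back to a solution in distinct elements of $G$, which Theorem 1 excludes. So each coset $xG$ is a cap.

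For part (4) the translation is as follows. Suppose $a+b+c+d=0$ with $a,b,c,d\in G_{2^{2n},2^n+1}$. By Theorem 1(4), after renaming we have $a=b$ and $c=d$, so the four elements are not distinct. For parts (1)--(3), Theorem 1 forces $a=b=c$, so again there are no distinct solutions.

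\textbf{Step 2: the cosets partition $\mathbb{F}_q^\times$.} The cosets of a subgroup partition the group, and there are $(q-1)/d$ of them, each of size $d$:
\begin{itemize}
\item $80/20=4$ for $\mathbb{F}_{81}$;
\item $242/22=11$ for $\mathbb{F}_{243}$;
\item $728/28=26$ for $\mathbb{F}_{729}$;
\item $(2^{2n}-1)/(2^n+1)=2^n-1$ for $\mathbb{F}_{2^{2n}}$.
\end{itemize}

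\textbf{Step 3: maximality and completeness.} For (1), a cap of size 20 in $\text{AG}(4,3)$ is maximal by the cited result of Pellegrino. For (2), I rely on the completeness of $G_{243,22}$ proved later in the paper. Multiplication by $x$ is an affine automorphism, so it maps a complete cap to a complete cap. Therefore every coset $xG_{243,22}$ is complete as well.

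\textbf{Step 4: the pairing in (3).} This is the main obstacle. The cosets of $G=G_{729,28}$ are $g^iG$ for $0\le i<26$, where $g$ generates $\mathbb{F}_{729}^\times$. I would pair $g^iG$ with $g^{i+13}G$ for $0\le i<13$. These 13 pairs are disjoint and cover all 26 cosets.

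Each pair $g^iG\cup g^{i+13}G$ equals $g^i(G\cup g^{13}G)$. By the scaling argument of Step 1, it therefore suffices to check that the single set $G\cup g^{13}G$ is a cap.

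Note that $g^{13}G$ has order 2 in $\mathbb{F}_{729}^\times/G$, so $G\cup g^{13}G$ is itself the subgroup of order 56. Checking that it is a cap is a finite verification. Equivalently, one must rule out $a+b+c=0$ with not all of $a,b,c$ in a single coset. Dividing by one of the elements reduces this to solutions of $1+u+v=0$ with $u,v$ in the order-56 subgroup. This is the computer-algebra check the paper mentions for $G_{729,28}\cup x\,G_{729,28}$, and I would take $x=g^{13}$.
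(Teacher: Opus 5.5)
Your Steps 1--3 are fine and match the paper's argument. Multiplication by $x$ preserves the defining equation, the cosets number $(q-1)/d$, maximality in (1) comes from the cited bound, and completeness in (2) transfers from $G_{243,22}$ by scaling.

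The problem is Step 4, and it is more than an unperformed computation: the pairing you chose fails. Since $g^{13}G$ has order $2$ in $\mathbb{F}_{729}^\times/G$, the set $G\cup g^{13}G$ is the subgroup $G_{729,56}$. Because $8\mid 56$, this subgroup contains $\mathbb{F}_9^\times$. Eight nonzero points of $\mathbb{F}_9\cong\text{AG}(2,3)$ cannot form a cap, since a cap there has at most $4$ points. Explicitly, with $i^2=-1$ we have $1+i+(-1-i)=0$. Here $1,i\in G$ because their orders divide $4$, while $-1-i$ has order $8$ because $(-1-i)^2=-i$; hence $-1-i\in G_{729,56}\setminus G=g^{13}G$. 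So $u=i$, $v=-1-i$ is exactly a solution of $1+u+v=0$ in the order-$56$ subgroup. By your own scaling argument, every set $g^iG\cup g^{i+13}G$ contains the line $g^i\{1,i,-1-i\}$. None of your $13$ sets is a cap, and the computer check you invoke does not cover $x=g^{13}$.

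The paper instead pairs $G$ with a coset $xG$ where $x$ generates $\mathbb{F}_{729}^\times/G$. It verifies by computer that $G\cup xG$ is a cap, then uses the translates $x^{2k}G\cup x^{2k+1}G$; that is evidently what the displayed $P_k$ intends. The phrase ``certain pairs'' signals that the partner coset matters. Your scaling reduction is the right mechanism, but it must be applied to a good partner.

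You can identify good partners without search using the norm $N(z)=z^{28}$ onto $\mathbb{F}_{27}^\times$, whose kernel is $G$. For $u\in G$ one gets $N(1+u)=(1+u)(1+u^{-1})=u+u^{-1}-1$. The values $t=u+u^{-1}$ with $u\in G\setminus\{\pm1\}$ are exactly the $t\in\mathbb{F}_{27}$ for which $t^2-1$ is a nonsquare. Checking both types of mixed lines, two points in $G$ or two in $zG$, then shows that $G\cup zG$ is a cap if and only if $N(z)$ is a nonsquare in $\mathbb{F}_{27}$ and $1-N(z)$ is a nonzero square. Your choice has $N(g^{13})=-1$ and $1-(-1)=-1$, which is a nonsquare. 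That is the failure exhibited above.
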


The fact that subgroups of $\mathbb{F}_q^\times$ are related to cap
sets is perhaps another example of the theme expressed in
\cite{Kalmynin2025} that ``multiplicatively structured sets cannot
exhibit significant additive structure.''  Hill also observes in
\cite{Hill1976} that known maximal caps have automorphism groups that
act transitively on the cap. The results in this paper are a beginning
exploration of the questions of which subgroups of
$\mathbb{F}_q^\times$ are cap sets, and whether or not the equation
$x_1^d+\dots +x_k^d=0$ over $\mathbb{F}_q$, for $d\mid q-1$, implies
that two powers $x_j^d$ must be equal. In general, though, the number
of affine $k$-flats in a proper subgroup of $\mathbb{F}_{q}^\times$
seems to be quite small relative to the number of affine $k$-flats in
$\mathbb{F}_{q}$.  This may be a good topic for further investigation.

There have been quite a few contributions to the question of maximum
sizes of cap sets in $\text{AG}(n,q)$. For more information on the
known values and on bounds of various kinds, see, for example, \cite{%
  Bennett2019,  Edel2004, EdelFerretLandjevStorme2002,
  EllenbergGijswijt2017, Grochow2019, Hill1974, Hill1976, Hill1983,
  TaitWon2021, Thackeray2021, Thas2020}, and the many references
within these articles.

\section{Proof of Theorem 1, Part 1}

\begin{proof}
Assume there are distinct $x,y,z\in \mathbb{F}_{81}^\times$ satisfying
$x+y+z=0$ and $x^{20}=y^{20}=z^{20}=1$.
Since $z$ is nonzero, we may divide by $z$ to obtain
$\left(\frac{x}{z}\right)+\left(\frac{y}{z}\right)+1=0$ and
$\left(\frac{x}{z}\right)^{20}=\left(\frac{y}{z}\right)^{20}=1$.  Then
we may as well assume $z=1$ in our proof and suppose $x+y+1=0$ and
$x^{20}=y^{20}=1$.

In characteristic 3, we have $(x+y)^3 =x^3+y^3$, and by repetition
that implies $(x+y)^{27}=x^{27}+y^{27}$.  Applying this identity to
the equation $x+1=-y$ with $x^{20}=y^{20}=1$, we obtain
$$
(x+1)^{27}=x^{27}+1=x^7+1=(-y)^{27}=(-y)^7=(x+1)^7.
$$
The binomial expansion of $(x+1)^7-x^7-1=0$ modulo 3 reduces to
$x^6-x^4-x^3+x=0$. This factors as $0=x(x^2-1)(x^3-1)=x(x+1)(x-1)^4$.
Since $x\neq 0$, this implies $x=\pm1$. If $x=-1$, $x+y+1=0$ implies
$y=0$ contradicting $y^{20}=1$.  If $x=1$, then $x+y+1=0$ implies
$y=1=x$.  This completes the proof that $G_{81,20}$ is a cap set, as well as
the stronger statement that, if $x+y+z=0$ for $x,y,z\in G_{81,20}$, then
$x=y=z$. 
\end{proof}

\section{Proof of Theorem 1, Part 2}

\begin{proof}
  As in the previous proof, we may assume there are
  $x,y\in \mathbb{F}_{243}^\times$ such that if $x+y+1=0$ and
  $x^{22}=y^{22}=1$.  Taking the 27-th power, we have
  \begin{align*}
    (x+1)^{27}=x^{27}+1=x^5+1&=(-y)^{27}=(-y)^5=(x+1)^5.
  \end{align*}
  The binomial expansion mod 3 of $(x+1)^5-x^5-1=0$ then yields
  $-x^4+x^3+x^2-x=0$, which factors as $-x(x+1)(x-1)^2=0$. Since
  $x\ne 0$ is assumed, and since $x=-1$ and $x+y+1=0$ implies $y=0$,
  the only valid possibility is $x=1$. Then $x+y+1=0$ implies $y=1=x$. 
\end{proof}

This produces a cap set $G_{22}=G_{243,22}$ of 22 elements of 
$\mathbb{F}_{243}^\times$. We show next this is a complete cap.
To prove a cap set $C$ is complete, we must show that for any $x\notin
C$, there exist distinct $y,z\in C$ such that $x+y+z=0$. In that case,
we say $x$ is \emph{represented} by $C$.
The analogous idea is valid in characteristic 2 as well if we define
$w$ to be represented by $G$ if there are distinct $x,y,z\in G$ such
that $w+x+y+z=0$.
The next theorem reduces the
calculations necessary to prove a subgroup is a complete cap.

\begin{thm}
  For a prime power $q$ and divisor $d\mid q-1$, suppose the subgroup
  $G$ of $d$-th powers in $\mathbb{F}_q^\times$ is a cap in the sense
  that there are no solutions to $x_1+\dots+x_m=0$ with distinct
  $x_1,\dots,x_m\in G$. An element $y\notin G$ is represented by
  $G$ if there are distinct $x_1,\dots,x_{m-1}\in G$ with
  $y+x_1+\dotsb+x_{m-1}=0$. We say $G$ is complete if every
  $y\notin G$ is represented by $G$.  Let $g$ be a primitive root of
  $\mathbb{F}_q$.  Then $G$ is complete if 0 and all powers $g^k$ for
  $1\le k\le d-1$ are represented by $G$.
\end{thm}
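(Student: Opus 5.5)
The idea is that being represented is preserved under multiplication by elements of $G$, and that the elements $g^k$ for $0\le k\le d-1$ give one representative of each coset of $G$.

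\textbf{Coset structure.} Since $G$ is the subgroup of $d$-th powers in the cyclic group $\mathbb{F}_q^\times=\langle g\rangle$, we have $G=\langle g^d\rangle$. Hence $G$ has index $d$, and its cosets are exactly $g^kG$ for $0\le k\le d-1$. Every nonzero $y\notin G$ therefore lies in a coset $g^kG$ with $1\le k\le d-1$, the coset $k=0$ being $G$ itself. So we may write $y=g^k h$ with $h\in G$ and $1\le k\le d-1$. The only element of $\mathbb{F}_q$ not covered this way is $0$, which is handled by hypothesis.

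\textbf{Invariance under $G$.} Suppose $z$ is represented by $G$, so there are distinct $x_1,\dots,x_{m-1}\in G$ with $z+x_1+\dots+x_{m-1}=0$. Let $h\in G$ and multiply this relation by $h$. This gives
\[
hz+hx_1+\dots+hx_{m-1}=0 .
\]
Since $G$ is a group, each $hx_i$ lies in $G$. Since multiplication by $h\neq0$ is injective, the elements $hx_1,\dots,hx_{m-1}$ are still distinct. Therefore $hz$ is represented by $G$.

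\textbf{Conclusion.} Applying the invariance step with $z=g^k$, which is represented by hypothesis, shows that $y=g^kh$ is represented. Together with the hypothesis that $0$ is represented, every $y\notin G$ is represented, so $G$ is complete.

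There is no serious obstacle. The only points needing care are that the listed powers $g^k$ with $1\le k\le d-1$ really hit every non-identity coset, which uses the index of $G$ being $d$, and that distinctness survives multiplication by $h$.
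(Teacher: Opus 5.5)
Your proof is correct and is essentially the paper's argument: the paper writes a nonzero $y\notin G$ as $g^{ad+r}$ with $1\le r\le d-1$ and multiplies the relation representing $g^r$ by $g^{ad}\in G$, which is exactly your coset decomposition combined with invariance under multiplication by $G$. Your explicit check that the $hx_i$ remain distinct elements of $G$ is a detail the paper leaves implicit.
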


\begin{proof}
  We must prove every $x\notin G$ is represented by $G$. By
  hypothesis, we may assume $x=g^k$ for some integer $d\le k\le q-2$
  and $d\nmid k$. By division, $k=ad+r$ for integers $a$ and $r$ with
  $1\le r\le d-1$. Since we assumed $g^r$ is represented by $G$, we
  have:
  $$
  g^{r} + g^{b_1d}+\dotsb+g^{b_{m-1}d}=0,
  $$
  for integers $b_1,\dots,b_{m-1}$. Multiplying by $g^{ad}$ shows
  $g^k=g^{ad+r}$ is represented by $G$:
  $$
  g^{ad+r} + g^{(b_1+a)d}+\dotsb+g^{(b_{m-1}+a)d}=0.
  $$
  Thus,   $G$ is complete. 
\end{proof}

\begin{thm}
  $G_{243,22}$ is a complete cap in $\mathbb{F}_{243}$ of the smallest
  possible size.
\end{thm}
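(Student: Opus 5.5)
Two things must be shown: that $G_{22}=G_{243,22}$ is complete, and that no complete cap in $\mathbb{F}_{243}\approx\text{AG}(5,3)$ has fewer than 22 points.

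\emph{Completeness.} Theorem 1 part (2) already shows $G_{22}$ is a cap. By the preceding theorem with $m=3$, it suffices to check that $0$ and $g^k$ for $1\le k\le 21$ are represented, where $g$ is a primitive root of $\mathbb{F}_{243}$.

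For $0$, note that $-1\in G_{22}$, since $(-1)^{22}=1$. Hence $0+1+(-1)=0$ represents $0$ with distinct $1,-1\in G_{22}$.

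For $g^k$, use the structure of $G_{22}$. Write $\mathbb{F}_{243}=\mathbb{F}_3[t]/(t^5-t+1)$ and put $h=g^{11}$, a generator of $G_{22}$. For each $k$ we must exhibit distinct $a,b$ with $a,b \in G_{22}$ and $a+b=-g^k$; equivalently, $-g^k$ must lie in the sumset $G_{22}+G_{22}$ of distinct pairs.

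The counting is tight but feasible. There are $\binom{22}{2}=231$ unordered pairs $\{a,b\}$. Their sums are never $0$ unless $b=-a$, which gives 11 pairs. They are never in $G_{22}$, by the cap property. So 220 pairs hit the $242-22=220$ elements outside $G_{22}\cup\{0\}$. Completeness is therefore equivalent to the map $\{a,b\}\mapsto a+b$ being injective on these 220 pairs.

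Injectivity is the statement that $a+b=c+d$ with $\{a,b\}\ne\{c,d\}$ never occurs, i.e.\ $G_{22}$ is a Sidon-type set. Dividing by $d$, this reduces to showing that $x+y-z=1$ has no solutions in $G_{22}$ other than trivial ones.

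I would first try the Frobenius trick from Parts 1 and 2. Raising to the 27th power sends each $u\in G_{22}$ to $u^{27}=u^5$. This gives the second relation $x^5+y^5-z^5=1$, and the 9th and 3rd powers give further relations. Eliminating $z$ from the system might force a trivial solution. If that elimination becomes unwieldy, I would fall back on the reduction theorem and a direct computer check: tabulate the 22 elements as vectors in $\mathbb{F}_3^5$ and verify that each of the 21 values $-g^k$ is a sum of two distinct elements.

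\emph{Minimality.} I expect this half to be the main obstacle. A counting argument alone shows that a complete cap $C$ of size $s$ represents at most $\binom{s}{2}$ points outside $C$. It therefore requires only $s+\binom{s}{2}\ge 243$, i.e.\ $s\ge 22$, since $21+210=231<243$ while $22+231=253\ge 243$. This already yields that 22 is the smallest possible size. Together with the injectivity above, it also explains why $G_{22}$ is extremal: it is a \emph{perfect} complete cap, with every outside point except the 11 excess-sum collisions $a+(-a)=0$ accounted for exactly.

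So the plan is: (i) prove the lower bound $s\ge22$ by the counting inequality; (ii) prove completeness via the reduction theorem, by verifying the 21 representations computationally or by the Frobenius-power elimination.
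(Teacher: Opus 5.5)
Your overall route matches the paper's. You reduce via the preceding theorem, represent $0$ as $0+1+(-1)$, certify the remaining coset representatives by explicit computation in $\mathbb{F}_3[g]/(g^5-g+1)$, and get minimality from $21+\binom{21}{2}=231<243$. That last step is exactly the paper's argument, so the ``main obstacle'' you anticipated is not one.

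There is, however, a concrete error in how you apply the reduction theorem. In that theorem $G$ is the subgroup of $d$-th powers. $G_{243,22}$ is the subgroup of $11$-th powers, so $d=11$, and one checks $g^k$ only for $1\le k\le 10$; you used the order $22$ in place of $d$. Your range $1\le k\le 21$ is not merely redundant, because it contains $k=11$. You yourself call $g^{11}$ a generator of $G_{22}$, so $-g^{11}\in G_{22}$, and by the cap property it cannot be a sum of two distinct elements of $G_{22}$. The computer check as you stated it would therefore fail at $k=11$. Separately, the completeness half is still only a plan. It is not proved until the ten representations are actually exhibited, as the paper does (for instance $g^5+g^{11}+g^{22}=0$).

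Your counting reformulation is correct and is a genuine addition. Since $-1\in G_{22}$, the $231-11=220$ pairs with $b\neq -a$ have sums in the $220$-element set $\mathbb{F}_{243}\setminus(G_{22}\cup\{0\})$. So completeness is equivalent to injectivity of $\{a,b\}\mapsto a+b$, and $G_{22}$ then represents every nonzero point off it exactly once. In fact this is the same finite check as the paper's ten identities. $G_{22}$ acts freely on those $220$ pairs with $10$ orbits, represented by $\{1,x\}$ with $x\sim x^{-1}$, and the sum map is equivariant. Injectivity therefore just says the $10$ orbits land in the $10$ distinct nontrivial cosets.

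Your primary route to injectivity via Frobenius is speculative, though. In Parts 1 and 2, normalizing a three-term relation leaves one free variable, so $u^{27}=u^5$ yields a one-variable polynomial that factors. Your four-term relation $x+y-z=1$ leaves two free variables, so each Frobenius relation gives one equation in two unknowns, with no evident reduction to a single variable. What actually closes the argument is the computational certificate.
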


\begin{proof}
  Since $G=G_{243,22}$ is the subgroup of 11-th powers, by our
  previous theorem we must show that 0 and $g^k$, $1\le k\le 10$, are
  represented by $G$.   A primitive root $g$ of
  $\mathbb{F}_{243}^\times$ has order $242=2\cdot 121$, and
  so $g^{121}$ has order 2 and is then equal to $-1$. Then
  $0+1+g^{121}=0$, which proves $0$ is represented by $G$.

  To represent $g^k$ for $1\le k\le 10$, we note that $g^5-g+1$ is
  irreducible mod 3. By computing the remainders of
  $g^k\pmod{g^5-g+1}$ mod 3 (for this we used the computer algebra
  system Maple (\cite{Maple2025}), although it is not hard to do by
  hand), we can confirm that $g$ has order 242.
  Working modulo $g^5-g+1$, we may find the following
  equations:
  \begin{align*}
    g&+ g^{55}+ g^{154}=0,
    &  g^{2}&+ g^{154}+ g^{220}=0,\\
    g^{3}&+ g^{165}+ g^{220}=0,
    &  g^{4}&+ g^{22}+ g^{55}=0,\\
    g^{5}&+ g^{11}+ g^{22}=0,
    &  g^{6}&+ g^{176}+ g^{220}=0,\\
    g^{7}&+ g^{33}+ g^{165}=0, 
    &  g^{8}&+1+ g^{220}=0,\\
    g^{9}&+ g^{11}+ g^{176}=0,
    &  g^{10}&+1 + g^{88}=0.
  \end{align*}
  These prove each $g^k$ is represented by $G$ for $1\le k\le 10$, and
  completes the proof that $G$ is complete.  Since a cap of size 21 or
  smaller can represent by addition at most
  $\binom{21}{2}+21=231< 243$ elements, there are no smaller complete
  caps. 
\end{proof}

\section{Proof of Theorem 1, Part 3}

\begin{proof}
  Once again, we may assume there are $x,y\in \mathbb{F}_{729}^\times$
  such that if $x+y+1=0$ and $x^{28}=y^{28}=1$.  Taking the 81-st
  power and noting $81\equiv 25\pmod{28}$, we have
  \begin{align*}
    (x+1)^{81}=x^{81}+1=x^{25}+1&=(-y)^{81}=(-y)^{25}=(x+1)^{25}.
  \end{align*}
  Multiply by $x^3+1=(x+1)^3$ to obtain
  $(x^{25}+1)(x^3+1)=(x+1)^{28}=(-y)^{28}=1$.
  Expanding then yields $x^{28}+x^{25}+x^3+1=1$.
  Assuming $x^{28}=1$, this reduces to $x^{25}+x^3+1=0$. Multiply by
  $x^3$ to obtain $1+x^6+x^3=0$. Multiply by $x^3-1$ to obtain
  $x^9-1=0$.  That shows $x^9=1$, which together with $x^{28}=1$
  implies $x=1$. Then $x=1$ in $x+y+1=0$ proves $y=1=x$.  
\end{proof}

All the theorems in this paper may be verified by relatively simple
and fast computer algebra calculations. If $g$ is a primitive root of
$\mathbb{F}_q^{\times}$ and if its minimal polynomial is
$\mathtt{poly}=p(x)=0$, the computer mathematics system Maple
(\cite{Maple2025}), for instance, has a fast polynomial remainder
procedure \texttt{rem} which allows us to test
$$
\verb|rem(x^(i) + x^(j) + x^(k), poly,x) mod p = 0|
$$
and thus determine if $\{g^i,g^j,g^k\}$ is an affine line.  The
authors are happy to share a basic Maple worksheet that confirms our
theorems. We have also confirmed these theorems with the finite fields
package in SAGE (see \cite{sagemath}). In addition, we used computer
algebra to verify that the disjoint union of cosets of
$G^{26}\cup x\,G^{26}$ is a cap set of size 56. However, the 112-cap
of Potechin is not a union of 4 cosets.  The pairs of cosets
$P_k=x^k\,G^{26}\cup x^{k+1}\, G^{26}$ for $0\le k\le 12$ form a
partition of $\mathbb{F}_{729}^\times$ by 13 cap sets of size 56.

\section{Proof of Theorem 1, Part 4}

\begin{proof}
  Just as in the previous proofs, we assume there are distinct
  $w,x,y\in \mathbb{F}_{2^{2n}}$, none equal to 1, such that
  $w+x+y+1=0$, and $w^{2^n+1}=x^{2^n+1}=y^{2^n+1}=1$.  In
  characteristic 2, we may rearrange the equation as $w+1=x+y$.  By
  repeated squaring, we have
  \begin{align*}
    (w+1)^{2^n}=w^{2^n}+1&=x^{2^n}+y^{2^n},
    & (w+1)^{2^{n+1}}=w^{2^{n+1}}+1&=x^{2^{n+1}}+y^{2^{n+1}}.
  \end{align*}
  Multiply the former equation by $w+1=x+y$ to obtain
  \begin{align*}
    (w+1)(w^{2^n}+1)
    &=w^{2^n+1}+w^{2^n}+w+1\\
    &=(x+y)(x^{2^n}+y^{2^n})
      =x^{2^n+1}+yx^{2^n}+xy^{2^n}+y^{2^n+1}.
  \end{align*}
  We may then cancel $w^{2^n+1}=x^{2^n+1}=y^{2^n+1}=1$ and factor to obtain:
  \begin{align*}
    w^{2^n}+w=yx^{2^n}+xy^{2^n}
    &\qquad\Rightarrow\qquad
      w(w^{2^n-1}+1)=xy(x^{2^n-1}+y^{2^n-1}).
  \end{align*}
  Also, applying $w^{2^n+1}=x^{2^n+1}=y^{2^n+1}=1$ to
  $w^{2^{n+1}}+1=x^{2^{n+1}}+y^{2^{n+1}}$ with $2^{n+1}-(2^n+1)=2^n-1$
  gives $w^{2^n-1}+1=x^{2^n-1}+y^{2^n-1}$.

  If $w^{2^n-1}+1=0$, then since $w^{2^n+1}=1$ we have $w^2=1$, which
  implies $w=1$ in characteristic 2.  If $w^{2^n-1}+1\ne 0$, then we
  can cancel and obtain $w=xy$.  Then $1+xy=x+y$, which implies
  $0=1+x+y+xy=(1+x)(1+y)$, and thus $x=1$ or $y=1$. This proves
  $G_{2^{2n},2^n+1}$ is a cap. 
\end{proof}

\section{Proof of Theorem 1, Part 5}

\begin{proof}
  The previous theorem rules out the possibility that there are
  distinct $w,x,y,z\in G_{2^{2n},2^n+1}$ with $w+x+y+z=0$.  Then we
  may assume $z=0$ and $w=1$. That leads to  $1+x+y=0$ for
  distinct $x,y\ne1$ satisfying $x^{2^n+1}=y^{2^n+1}=1$.

  By rearranging as $1+x=y$, repeated squaring and applying
  $x^{2^n+1}=y^{2^n+1}=1$, we obtain
  \begin{align*}
    1+x^{2^n}&=y^{2^n},
    & 1+x^{2^{n+1}}&=1+x^{2^n-1}=y^{2^{n+1}}=y^{2^n-1}.
  \end{align*}
  Multiply the former equation by $(1+x)=y$ to yield
  \begin{align*}
    (1+x)(1+x^{2^n})&=1+x+x^{2^n}+x^{2^n+1}=y^{2^n+1}=1.
  \end{align*}
  Cancel $x^{2^n+1}=1$ to get  $x+x^{2^n}=1
  =x(1+x^{2^n-1})=xy^{2^n-1}$.
  Then $x=xy^{2^n+1}=xy^{2^n-1}\cdot y^2=y^2$.
  Then $1+x=y$ becomes $1+y+y^2=0$. That imply $y^3=1$.
  Since $3\mid 2^n+1$ if and only if $n$ is odd,  we see that, if $n$
  is even, $y^3=1$ and $y^{2^n+1}=1$ imply $y=1$.
  This contradiction proves $G_{2^{2n},2^n+1}\cup\{0\}$ is a cap.
\end{proof}

\section{Translating powers into cards}

For those readers interested in the games of SET and EvenQuads, we
explain how to translate these results into specific codes for cards.
First, we need a primitive root $g$ of $\mathbb{F}_q^\times$ and its
minimal polynomial over $\mathbb{F}_p$.  For SET, it is easy to verify
that $g^4-g-1$ is irreducible over $\mathbb{F}_3$.  By applying the
Division Algorithm to find the remainder after division by $g^4-g-1$,
each power $g^k$ may be uniquely written as
$g^k=c_1g^3+c_2g^2+c_3g+c_4$ for $c_j\in \mathbb{F}_3$. These
coefficients $(c_1,c_2,c_3,c_4)$ give the coordinates of the
corresponding card. It turns out that a root $g$ of $g^4-g-1$ has
order 80 and thus the powers $g^k$ give all 80 nonzero elements.
Below is a table of codes for the cap sets of cosets modulo $G_{81,20}$. In
each $4\times 5$ block, the exponent $k$ increases across each row and
then down to the next row from 0 to 19.
$$
\begin{array}[t]{|*{5}{c|}|*{5}{c|}}\hline
  \multicolumn{5}{|c||}{}&\multicolumn{5}{|c|}{}\\[-10pt]
  \multicolumn{5}{|c||}{g^{4k}}&\multicolumn{5}{|c|}{g^{4k+1}}\\[2pt]\hline
                         &&&&&&&&& \\[-12pt]
  0001&0011&0121&1001&1022 & 0010&0110&1210&0021&0201 \\[2pt]\hline
                         &&&&&&&&& \\[-12pt]
  1220&0101&1111&2202&1211 & 2211&1010&1121&2012&2121 \\[2pt]\hline
                         &&&&&&&&& \\[-12pt]
  0002&0022&0212&2002&2011 & 0020&0220&2120&0012&0102 \\[2pt]\hline
                         &&&&&&&&& \\[-12pt]
  2110&0202&2222&1101&2122 & 1122&2020&2212&1021&1212 \\[2pt]\hline
  \multicolumn{5}{|c||}{}&\multicolumn{5}{|c|}{}\\[-10pt]
  \multicolumn{5}{|c||}{g^{4k+2}}&\multicolumn{5}{|c|}{g^{4k+3}}\\[2pt]\hline
                         &&&&&&&&& \\[-12pt]
  0100&1100&2111&0210&2010 & 1000&1011&1102&2100&0122 \\[2pt]\hline
                         &&&&&&&&& \\[-12pt]
  2102&0111&1221&0112&1202 & 1012&1110&2221&1120&2001 \\[2pt]\hline
                         &&&&&&&&& \\[-12pt]
  0200&2200&1222&0120&1020 & 2000&2022&2201&1200&0211 \\[2pt]\hline
                         &&&&&&&&& \\[-12pt]
  1201&0222&2112&0221&2101 & 2021&2220&1112&2210&1002 \\[2pt]\hline
\end{array}
$$
Translating these codes into number, color, shape and shading gives 4
disjoint 20-card no-SET's. The remaining card has code 0000.

Similarly, for the EvenQuads deck, a root $g$ of $g^6+g+1$, which is
irreducible modulo 2, has order 63.  The binary code
$c_1c_2c_3c_4c_5c_6$ of a power $g^k$ comes from the remainder
$c_1g^5+c_2g^4+c_3g^3+c_4g^2+c_5g+c_6$ after division by $g^6+g+1$ mod
2. In the table of 7-th powers below, we translate consecutive binary
coefficients 00, 01, 10, 11 to 0, 1, 2, 3, resp. Each row below is a
maximal cap.
$$
\begin{array}[t]{|c||*{9}{c|}}\hline
  &&&&&&&&& \\[-6pt]
  j&g^j&g^{7+j}&g^{14+j}&g^{21+j}&g^{28+j}&g^{35+j}&g^{42+j}&g^{49+j}&g^{56+j}
  \\[2pt]\hline
  &&&&&&&&& \\[-12pt]
  0& 001 &012 &110 &323 &130 &023 &322 &122 &133 \\[2pt]\hline
  &&&&&&&&& \\[-12pt]
  1& 002 &030 &220 &311 &320 &112 &313 &310 &332 \\[2pt]\hline
  &&&&&&&&& \\[-12pt]
  2& 010 &120 &103 &221 &303 &230 &231 &223 &333 \\[2pt]\hline
  &&&&&&&&& \\[-12pt]
  3& 020 &300 &212 &101 &211 &123 &121 &111 &331 \\[2pt]\hline
  &&&&&&&&& \\[-12pt]
  4& 100 &203 &033 &202 &021 &312 &302 &222 &321 \\[2pt]\hline
  &&&&&&&&& \\[-12pt]
  5& 200 &011 &132 &013 &102 &233 &213 &113 &301 \\[2pt]\hline
  &&&&&&&&& \\[-12pt]
  6& 003 &022 &330 &032 &210 &131 &031 &232 &201 \\[2pt]\hline
\end{array}
$$
Using these codes for number, color, shape produces seven disjoint cap
sets of 9 cards in the EvenQuads deck, with the remaining card having
code 000.

\newcommand{\etalchar}[1]{$^{#1}$}
\providecommand{\bysame}{\leavevmode\hbox to3em{\hrulefill}\thinspace}
\providecommand{\href}[2]{#2}

\end{document}